\newtheorem{theorem}{Theorem}[section]
\newtheorem{lemma}[theorem]{Lemma}
\newtheorem{proposition}[theorem]{Proposition}
\theoremstyle{definition}
\newtheorem{definition}[theorem]{Definition}
\newtheorem{remark}[theorem]{Remark}
\begin{document}
\title[Uniform distribution mod $1$ for continued fractions]{Uniform distribution mod $1$ for sequences of ergodic  sums and continued fractions}
\author{Albert M. Fisher}
\address{Instituto de Matemática e Estatística, Universidade de São Paulo}
\email{afisher@ime.usp.br}
\thanks{AF was partially supported by FAPESP grant \#2016/25053-8}
\author{Xuan Zhang}
\address{Instituto de Matemática e Estatística, Universidade Federal Fluminense}
\email{xuanz@id.uff.br}
\date{\today}
\thanks{XZ was partially supported by FAPESP grant \#2018/15088-4.}
\subjclass[2010]{11K50, 11K06, 37A50}

\begin{abstract}
  We establish a coboundary condition for a sequence of ergodic sums (i.e.~Birkhoff partial sums) to be almost surely uniformly distributed mod $1$. Applications are given when the sequence is generated by a Gibbs-Markov map. In particular, we show that for almost every real number, the sequence of denominators of the convergents of its continued fraction expansion satisfies Benford's law.
\end{abstract}
\maketitle

\section{Introduction}
For $x\in (0,1)$ irrational, denote by $[a_1, a_2, \ldots]$ its continued fraction expansion  and write $p_n/q_n$ for  the $n$-th convergent $[a_1, \ldots, a_n]=1/(a_1+\cdots +1/a_n)$. The sequence $(\log q_n(x))_{n\in\mathbb N}$ behaves much like  a sum of i.i.d.~random variables in that it obeys a range of  classical limit theorems (\cite{Philipp1971}), in this note we are interested in uniform distribution  (u.d.) mod $1$. Recalling the definition, a sequence $(x_n)$ of real numbers is said to be  u.d.~mod $1$ if and only if for every $t\in [0,1)$,
\begin{equation*}
\lim_{N\to\infty}\frac 1N\#\{1\leqslant n\leqslant N: \{ x_n\}\leqslant t\}=t,
\end{equation*}
where $\{x_n\}$ represents the fractional part of $x_n$. Equivalently, on the circle $\mathbb T=\mathbb R/\mathbb Z$ with Lebesgue measure $dt$, then for every continuous function $f\in C(\mathbb T)$ we have:
\[\lim_{N\to\infty}\frac1N \sum_{n=1}^{N}f(x_n)=\int_{\mathbb T} f(t) dt.\]

Uniform distribution mod $1$ is closely related to {\em Benford's law}  or {\em the first-digit law}, that is, the proportion of entries with the first digit $d$, $d\in\{1,\ldots, 9\}$, tends to $\lg (1+1/d)$. Here $\lg=\log_{10}$ is the decimal logarithm. First noted by Benford as well as Newcomb, this specific distribution has been observed to occur in a variety of social science statistics and data sets as well as in many number-theoretical experiments. We refer to \cite{BergerHill2011} for a comprehensive survey of Benford's law. For a nonzero real number $x$, let $s(x)$ be its decimal significand, which is the unique number $t\in [1,10)$ such that $|x|=10^kt$ holds for some $k\in\mathbb Z$. A sequence of real numbers $(x_n)_{n\in\mathbb N}$ is said to be a {\em Benford sequence} if the corresponding sequence of decimal significands  $(s(x_n))_{n\in\mathbb N}$ is distributed in $[1, 10)$ with density $\lg$, that is,
\[\lim_{N\to\infty}\frac1N\#\{1\leqslant n\leqslant N: s(x_n)\leqslant t\}=\lg (t), \quad \forall t\in [1, 10).\]
Many sequences are known to be Benford; examples are $(2^n)_{n\in\mathbb N}$ and the Fibonacci sequence. The sequences of all natural numbers and of the prime numbers are not Benford, but are {\em weakly Benford} (with the usual  Ces\'aro density being replaced by logarithmic density).
Benford's law is related to uniform distribution mod $1$ via the following equivalent condition (see e.g.~\cite[Theorem 4.2]{BergerHill2011}):
\begin{center}$(x_n)_{n\in\mathbb N}$ is a Benford sequence if and only if $(\lg x_n)_{n\in\mathbb N}$ is u.d.~mod $1$.
\end{center}
Hence the question under consideration can be rephrased by whether $(q_n(x))_{n\in\mathbb N}$ is a Benford sequence.

When $x$ is a quadratic irrational, several authors at about  the same time (\cite{JagerLiardet1988, KanemitsuNagasakaRauzyEtAl1988, SchatteNagasaka1991}) showed that $(q_n(x))_{n\in\mathbb N}$ is indeed a Benford sequence.
In particular this reconfirms that the Fibonacci sequence, being equal to $(q_n(\frac{1+\sqrt 5}{2}))_{n\in\mathbb N}$, is Benford. Jager (as mentioned in \cite{KanemitsuNagasakaRauzyEtAl1988}) asked  whether $(q_n(x))_{n\in\mathbb N}$ is Benford for almost every $x$. In this note we answer this question affirmatively.
\begin{theorem}\label{thm:benford}
$(q_n(x))_{n\in\mathbb N}$ is a Benford sequence for almost every $x$. Equivalently, $(\lg q_n(x))_{n\in\mathbb N}$ is u.d.~mod $1$ for almost every $x$.
\end{theorem}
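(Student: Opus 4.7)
The plan is to reduce Theorem~\ref{thm:benford} to the a.s.\ uniform distribution of Birkhoff sums for the Gauss map $T(x) = \{1/x\}$, and then invoke the paper's general theorem for Gibbs--Markov systems.

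\emph{Reduction to an ergodic sum.} Comparing the chain-rule computation $|(T^n)'(x)| = \prod_{k=0}^{n-1}(T^k x)^{-2}$ with the M\"obius formula $|(T^n)'(x)| = (q_n + q_{n-1} T^n x)^2$, which follows from $x = (p_n + p_{n-1} T^n x)/(q_n + q_{n-1} T^n x)$, gives the clean identity $q_n + q_{n-1} T^n x = \prod_{k=0}^{n-1}(T^k x)^{-1}$. Taking $\lg$ on both sides,
\[
\lg q_n(x) \;=\; S_n \varphi(x) \;-\; \rho_n(x), \qquad \varphi(x) := -\lg x, \qquad \rho_n(x) := \lg\!\bigl(1 + (q_{n-1}/q_n)\,T^n x\bigr) \in [0,\lg 2].
\]

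\emph{Applying the Gibbs--Markov theorem.} The Gauss map together with its invariant measure $d\mu = (\log 2)^{-1}(1+x)^{-1}dx$ and the Markov partition $\{(1/(k+1), 1/k)\}_{k\ge 1}$ forms a Gibbs--Markov system; the observable $\varphi$ is locally Lipschitz on each partition element and has finite moments, so the paper's general result applies. It reduces the a.s.\ uniform distribution mod $1$ of $(S_n\varphi)$ to a non-arithmeticity condition: for every nonzero integer $k$, there is no measurable $g:(0,1)\to\mathbb R$ and $\mathbb Z$-valued $h$ with $k\varphi = g - g\circ T + h$ almost everywhere.

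\emph{Verifying non-arithmeticity.} Any such cohomology, evaluated at the fixed point $x_* = 1/\phi = [0;\overline{1}]$ of $T$ (with $\phi = (1+\sqrt 5)/2$), telescopes to the necessary condition $k\varphi(x_*) = k\lg\phi \in \mathbb Z$. But $\lg\phi$ is irrational: a relation $\phi^k = 10^m$ with $k\neq 0$, combined with the identity $\phi^k = F_k\phi + F_{k-1}$ (valid for all $k\in\mathbb Z$, where $F_k$ is the $k$-th Fibonacci number) and the irrationality of $\phi$, would force $F_k = 0$ and hence $k = 0$. Thus $k\lg\phi \notin \mathbb Z$ for every $k\neq 0$; non-arithmeticity holds, and $S_n\varphi(x)\pmod 1$ is uniformly distributed for Lebesgue-a.e.\ $x$.

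\emph{Main obstacle.} The subtlest step is transferring u.d.\ mod $1$ from $S_n\varphi(x)$ to $\lg q_n(x) = S_n\varphi(x) - \rho_n(x)$, since the bounded error $\rho_n$ does not tend to $0$. The cleanest remedy is to lift the analysis to the natural extension $\bar T : [0,1]^2 \to [0,1]^2$ of the Gauss map, where
\[
\lg q_n(x) \;=\; -\sum_{k=1}^{n} \lg y_k
\]
is an \emph{exact} Birkhoff sum along the orbit of $(x, 0)$ under $\bar T$, with $(x_k, y_k) = \bar T^k(x,0)$ and $y_k = q_{k-1}/q_k$. The paper's Gibbs--Markov framework still applies via the one-sided factor, the non-arithmeticity verification is unchanged, and a Knuth-type equidistribution of the initial slice $\{(x,0) : x \in [0,1]\}$ with respect to the natural-extension measure transports the a.s.\ conclusion back to Lebesgue-a.e.\ $x$, yielding the theorem.
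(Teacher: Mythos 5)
Your first two steps coincide with the paper's own route: the identity $\lg q_n=S_n\varphi-\rho_n$ with $\varphi=-\lg x$ is exactly Lemma \ref{lem:delta} (with $\delta_n=-\rho_n$), and the non-arithmeticity of $\varphi$ via the golden-ratio fixed point is Proposition \ref{prop:cob_cf}(3). One caveat there: you "evaluate the cohomological equation at $x_*$ and telescope", but for a merely measurable transfer function $g$ this evaluation at a single point is meaningless; the step is legitimate only after upgrading $g$ to a Lipschitz function, which is what the Aaronson--Denker regularity theorem quoted in Proposition \ref{prop:cob}(2) supplies. Your Fibonacci argument for the irrationality of $\lg\phi$ is correct and is a nice concrete substitute for the paper's assertion. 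You have also correctly isolated the real difficulty: $\rho_n$ is bounded but does not tend to $0$ and is not itself an ergodic sum.

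The gap is in your resolution of that difficulty. On the natural extension $\bar T$ the identity $\lg q_n(x)=-\sum_{k=1}^{n}\lg y_k$ is indeed an exact Birkhoff sum, but only along the orbit of $(x,0)$, and the slice $\{y=0\}$ is a null set for the natural-extension measure; an almost-everywhere uniform-distribution statement for $\bar T$ says nothing about these points without a further argument. Moreover, Bosma--Jager--Wiedijk/Knuth-type equidistribution of $(T^nx,\,q_{n-1}/q_n)$ in the square is a statement about the base orbit, not about the mod-$1$ distribution of the Birkhoff sums of the unbounded cocycle $-\lg y$, so it cannot by itself "transport" the conclusion to the slice. What actually closes the gap is a quantitative estimate that the cocycle over $(x,0)$ and over a generic $(x,y)$ differ by a summable amount --- equivalently, that $\rho_n(x)$ depends on the initial digits only up to an error that is exponentially small in the length of the suffix retained. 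This is precisely the paper's Lemma \ref{lem:aprox_delta}, $|\delta_k(T^{n-k}x)-\delta_n(x)|<-\log(1-2^{-k/2})$, which together with the elementary approximation Lemma \ref{lem:aprox} lets one replace $\log q_n$ by the genuine ergodic sums $\mathcal S_n h^{(k)}$ of the modified observables $h^{(k)}=h\circ T^k-\delta_k+\delta_k\circ T$; each $h^{(k)}$ inherits the coboundary condition from $h\circ T^k$ because they differ by an exact coboundary. Your natural-extension picture is the right heuristic (it encodes the same exponential memory loss that makes Lemma \ref{lem:aprox_delta} true, and could be completed along these lines by a Fubini argument plus the summable-difference estimate), but as written the decisive step is asserted rather than proved, and the particular tool you invoke is not the one that does the job.
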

We note that Schatte in \cite{Schatte1990} presented a proof by way of a direct analysis of the continued fractions. Some of his estimates were illustrated via examples,
rather than being given full proofs valid for  the general cases. Here we present a new, detailed proof, based on ergodic theory methods. We believe this more general and hopefully more transparent approach will lead to new insights on this and other  related questions. For one example, it follows immediately from our proof that for any constant $\rho\in\mathbb R$ the sequence $(\lg q_n(x)+ n\rho)_{n\in\mathbb N}$ is also u.d.~mod $1$ for almost every $x$; the same holds for the sequences $(a_1+\cdots + a_n)_{n\in\mathbb N}$ and $(\lg(a_1\cdots a_n))_{n\in\mathbb N}$.

In what follows will use the natural logarithm $\log$ in  place of $\lg$; it will be clear that the proofs work for both.

The almost sure behavior of the sequence $(\log q_n(x))_{n\in\mathbb N}$ is somewhat different from the situation where $x$ is a quadratic irrational, which equivalently has an eventually periodic continued fraction expansion. In this  case, as shown by Jager-Liardet (\cite{JagerLiardet1988}), $\log q_n(x)=\frac{n}{l(x)}\alpha(x)+c_n(x)+o(1)$ for large $n$, where $\alpha(x)$ is an irrational number, $l(x)$ is the period in the continued fraction expansion of $x$, and $c_n=c_{n+l}$ takes a finite number of values periodically. One can then deduce uniform distribution mod $1$ of the sequence $(\log q_n(x))_{n\in\mathbb N}$ from that of the irrational rotation $(n\alpha(x))_{n\in\mathbb N}$, the classical Weyl's Theorem.
In the almost sure case, $\lim_{n\to\infty}\log q_n(x)/n$ converges to the Lévy constant $\frac{\pi^2}{12\log 2}$ and the rate of convergence is of the order $O(\sqrt{n^{-1}\log\log n})$ by the law of iterated logarithm of Philipp-Stackelberg (\cite{PhilippStackelberg1969}). However we do not know if the L\'evy constant is irrational; moreover this rate of convergence is too weak to  show almost sure uniform distribution of $(\log q_n(x))_{n\in \mathbb N}$ as a direct consequence of that for a rotation; an explanation is that this gives  $\limsup$ rather than the sought-for time average information.
Nevertheless, one might still guess that $(\log q_n(x))_{n\in\mathbb N}$ is u.d.~mod $1$ for almost every $x$, given that the statistics of $\log q_n(x)$ resembles well that of a partial sum of i.i.d.~random variables and that, as shown by Robbins (\cite{Robbins1953}),  the sequence of partial sums of i.i.d.~random variables  (if not supported on a rational lattice) is almost surely uniformly distributed (a.s.u.d.) mod $1$.

The main idea of our proof is to approximate the sequence $(\log q_n(x))_{n\in\mathbb N}$ fast enough by a series of sequences of ergodic sums (in other words, sequences of partial sums of stationary processes), each  of which is a.s.u.d.~mod $1$ (Lemma \ref{lem:aprox}). Sequences of partial sums of stationary processes were studied by Holewijn (\cite{Holewijn1969/70}) and more recently by Chenavier, Mass\'e and Schneider (\cite{ChenavierMasseSchneider2018}) where they found a necessary and sufficient condition for such a sequence to be a.s.u.d.~mod $1$. We use a classical argument of Furstenberg to find a condition in terms of coboundaries for sequences of the partial sums generated by ergodic maps (Theorem \ref{thm:asud}). When the sequence is generated by a Gibbs-Markov map we give examples where the coboundary condition is verified (Theorem \ref{thm:ud_a}), and these provide the approximating sequences to $(\log q_n)$.

\section{Sequence of ergodic sums}
Let $(X, \mu)$ be a standard probability space, $T:X\to X $ be a $\mu$-preserving transformation, and $f:X\to \mathbb R$ be a measurable function (also called here an {\em observable}). For any $n\in\mathbb N$, $x\in X$, form the ergodic sum $\mathcal S_n f(x)=f(x)+\cdots + f(T^{n-1}x)$.

Assuming  ergodicity of $T$, we apply an argument of  Furstenberg (\cite[Lemma 2.1]{Furstenberg1961}; see also \cite[Theorem 4.4]{BedfordFisherUrbanski2002}) to show that $\mathcal S_nf(x)$ is almost surely uniformly distributed (a.s.u.d.) mod $1$ under a coboundary condition for $f$. Note that we do not assume the compactness of $X$ as done in \cite{Furstenberg1961}.

\begin{theorem}\label{thm:asud}
Let $T$ be an ergodic $\mu$-preserving transformation and $f:X\to \mathbb R$ measurable. Suppose for all integers $k\neq 0$, the equation $e^{2\pi i k f(x)}=g(x)/g(Tx)$ (a.s.)~has no solution  $g\in L^2(X)$. Then $\mathcal S_nf(x)$ is a.s.u.d.~mod $1$.
\end{theorem}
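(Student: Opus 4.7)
The natural strategy is to combine Weyl's equidistribution criterion with an application of Birkhoff's ergodic theorem to the skew product $T_f:X\times\mathbb T\to X\times\mathbb T$ given by $T_f(x,t)=(Tx,\,t+f(x)\bmod 1)$, which preserves $\mu\otimes dt$. By Weyl's criterion and a countable intersection over $k\in\mathbb Z\setminus\{0\}$, it suffices to show that for each fixed $k\neq 0$ and $\mu$-a.e.\ $x$,
\[
\frac{1}{N}\sum_{n=0}^{N-1} e^{2\pi i k \mathcal S_n f(x)} \longrightarrow 0.
\]
Setting $F_k(x,t)=e^{2\pi i k t}$ and computing gives $F_k\circ T_f^n(x,t)=e^{2\pi i k t}\,e^{2\pi i k \mathcal S_n f(x)}$, so once $T_f$ is known to be ergodic, Birkhoff's theorem applied to the bounded function $F_k$, together with $\int F_k\,d(\mu\otimes dt)=0$ and Fubini (using $|e^{2\pi i k t}|=1$), yields the required convergence for $\mu$-a.e.\ $x$.

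The crux is therefore to deduce ergodicity of $T_f$ from the coboundary hypothesis, which I would handle by the classical Fourier argument on the fibers. Any $T_f$-invariant $H\in L^2(\mu\otimes dt)$ decomposes as $H(x,t)=\sum_{j\in\mathbb Z} h_j(x)\,e^{2\pi i jt}$ with $h_j\in L^2(\mu)$, and $T_f$-invariance of $H$ translates coefficient-by-coefficient into
\[
e^{2\pi i j f(x)}\,h_j(Tx)=h_j(x)\quad\mu\text{-a.s., for each }j\in\mathbb Z.
\]
For $j=0$ this means $h_0$ is $T$-invariant, hence constant by ergodicity of $T$. For $j\neq 0$, the modulus $|h_j|$ is $T$-invariant and so a.s.\ equal to some constant $c_j\geq 0$; if $c_j>0$ then $h_j\neq 0$ a.s., the equation $e^{2\pi ijf(x)}=h_j(x)/h_j(Tx)$ is meaningful a.s., and $g=h_j\in L^2$ furnishes a forbidden solution. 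Hence $h_j\equiv 0$ for every $j\neq 0$, $H$ is a.s.\ constant, and $T_f$ is ergodic.

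The main obstacle to worry about, and the reason no compactness of $X$ is needed (in contrast to Furstenberg's original formulation), is justifying the Fourier decomposition. This is cleanly resolved by observing that the expansion is taken only in the fiber variable $t\in\mathbb T$, where Fourier analysis is entirely classical; the base space $X$ enters only through measure-theoretic coefficients $h_j\in L^2(\mu)$, for which the assumption that $(X,\mu)$ is standard suffices. The passage from an $L^2$ eigenfunction $h_j$ to the forbidden coboundary solution is then automatic once one uses ergodicity of $T$ to dichotomize $|h_j|$ between $0$ and a positive constant.
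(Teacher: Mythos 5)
Your proof is correct, and its core --- the skew product $T_f(x,t)=(Tx,t+f(x))$ preserving $\mu\otimes dt$, and the proof of its ergodicity by expanding an invariant $H$ in Fourier series along the fiber, noting $|h_j|$ is $T$-invariant hence constant, and using the coboundary hypothesis to kill every $h_j$ with $j\neq 0$ --- is exactly the paper's argument. Where you genuinely differ is in the passage from ergodicity of $T_f$ to almost sure uniform distribution. The paper applies Birkhoff's theorem to $H(x,t)=h(t)$ for a countable dense family of $h\in C(\mathbb T)$, obtaining a full-measure set of ``fiber generic points,'' and then needs a translation argument (if $(x,t)$ is generic, so is $(x,t+s)$) to show this set has the saturated form $E\times\mathbb T$; this extra step is forced because Birkhoff only gives genericity of $\lambda$-a.e.\ $(x,t)$, while the conclusion concerns the specific points $(x,0)$. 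You sidestep this by testing only against the characters $F_k(x,t)=e^{2\pi i kt}$, for which
\[
\frac1N\sum_{n=0}^{N-1}F_k\circ T_f^{\,n}(x,t)=e^{2\pi ikt}\cdot\frac1N\sum_{n=0}^{N-1}e^{2\pi ik\,\mathcal S_nf(x)},
\]
so the modulus of the Birkhoff average is independent of $t$; Fubini then yields the a.e.-$x$ statement directly, and Weyl's criterion (after a countable intersection over $k\neq 0$) finishes. Your route is shorter and avoids both the separability argument and the translation trick, at the cost of being tailored to the circle and its characters, whereas the paper's argument gives genericity for every continuous test function on the fiber in one stroke. Two harmless cosmetic points: your sum starts at $n=0$ (where $\mathcal S_0f=0$) rather than $n=1$, which perturbs the average by $O(1/N)$; and Birkhoff must be applied to the real and imaginary parts of the bounded complex function $F_k$.
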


\begin{proof}
We write $\mathbb T={\mathbb R}/{\mathbb Z}$ with the Lebesgue measure $dt$.
Let $Y=X\times \mathbb T$ and define the skew product  $T_f: Y \to Y$ by $T_f(x,
t)=(Tx, t+f(x))$. Then $T_f^n(x,t)=(T^nx, t+\mathcal S_nf(x))$.
Now firstly
$\lambda=\mu\times dt$ is an invariant measure for $T_f$; this is because for any $F\in L^\infty_\lambda(Y)$
\begin{align*}
\int_Y F\circ T_f d\lambda&=\iint_Y F(Tx, t+f(x))dt d\mu =\iint_Y F(Tx, t)dtd\mu\\
&=\iint_Y F(x, t)dt d\mu=\int_Y F d\lambda
\end{align*}
by the invariance of $dt$ under translation and the invariance of $\mu$ under $T$.
Moreover the skew product transformation is ergodic; we follow  Furstenberg's proof of this part.  If not, then there is a non-constant invariant function $F\in L^2_\lambda(Y)$ for $T_f$, that is, $F\circ T_f=F$ almost everywhere. Now for almost every $x$, $F(x,
\cdot)\in L^2(\mathbb T)$.
We write  its Fourier expansion,  $F(x, t)= \sum_{n\in\mathbb Z}a_n(x)e^{2\pi i n t},$
with $a_n\in L^2_\mu(X)$. Then \[F\circ T_f(x, t) = \sum_{n\in\mathbb Z}a_n(Tx)e^{2\pi i n f(x)}e^{2\pi i n t}.\] $F\circ T_f= F$ implies $a_n(x)=a_n(Tx)e^{2\pi inf(x)}$, hence $|a_n(x)|=|a_n(Tx)|$, for every $n\in\mathbb Z$ and almost every $x$. The ergodicity of $T$ implies that $|a_n(x)|$ is a.s.~a constant  for every $n\in\mathbb Z$. Since $F$ is not  constant, one of $|a_n(x)|$ with $n\neq 0$  is not $0$. Say for $k\neq 0$, $|a_k(x)|$ is a nonzero constant. Then $e^{2\pi i k f(x)}=a_k(x)/a_k(Tx)$ a.s., a contradiction to the hypothesis,
proving ergodicity of $T_f$.

Next we show how u.d.~in $\mathbb T$ follows from this.  Let $h\in C(\mathbb T)$. We extend this to $H: Y\to \mathbb R$ by $H(x,t)= h(t)$. This is a measurable function which is continuous on the fiber over $x$, for each $x$. Note that $H\in L^1_\lambda(Y)$.

The Birkhoff ergodic theorem implies that for every such $H$, hence for each  continuous function $h\in C(\mathbb T)$, there is a full measure set $E_h$ such that for every $(x, t)\in E_h\subseteq Y,$
\begin{equation}\label{eq:1}
\lim_{N\to\infty}\frac1N \sum_{n=0}^{N-1}H\circ T_f^{n}(x,t)=\int_Y
H d\lambda=\int_{\mathbb T}h dt.
\end{equation}
Since $\mathbb T$ is compact, $C(\mathbb T)$ is separable. Suppose $\{h_i\}_{i\in \mathbb N}$ is a dense subset of $C(\mathbb T)$, then \eqref{eq:1} holds for every $h_i$ and all $(x,t)$ in $\cap_{i\in \mathbb N} E_{h_i}$. This passes over to every $h\in C(\mathbb T)$:  we approximate $h$ $\varepsilon$-uniformly by functions $h_i$, and  both sides of \eqref{eq:1} are within $\varepsilon$.

In conclusion,  $\lambda$-almost every $(x,t)$ is a {\em fiber generic point}, that is, \eqref{eq:1} holds for every $h\in C(\mathbb T)$.

Suppose $(x,t)$ is a fiber generic point, then $(x, t+s)$ is also a fiber generic point for any $s\in \mathbb T$. This is because, for every $h\in C(\mathbb T)$ defining $h_s(\cdot)=h(\cdot+s)\in C(\mathbb T)$ and $H_s(x,t)=h_s(t)$,
\begin{equation*}
\lim_{N\to\infty}\frac1N \sum_{n=0}^{N-1}H\circ
  T_f^{n}(x, t+s)=\lim_{N\to\infty}\frac1N
                      \sum_{n=0}^{N-1}H_s\circ
                      T_f^{n}(x,t)=\int_{\mathbb T} h_sdt=\int_{\mathbb T} h d t.
\end{equation*}
So the set of all fiber generic points is $E\times \mathbb T$ for some $E\subset X$. Since this set has full $\lambda$-measure, $\mu(E)=1$. Now for every $x\in E$ $(x,0)$ is a fiber generic point. This means that for every  $h\in  C(\mathbb T)$,
 \[\lim_{N\to\infty}\frac1N \sum_{n=0}^{N-1}h(\mathcal S_nf(x))=\int_{\mathbb T} hd t,\]
 equivalently $\mathcal S_nf(x)$ is u.d.~mod $1$ for every $x\in E$.
\end{proof}

The coboundary condition is close to being a necessary and sufficient condition.
Recall Weyl's criterion (e.g.~\cite{KuipersNiederreiter1974}): $(x_n)$ is u.d. mod $1$ if and only if for every integer $k\neq 0$
\[\lim_{N\to\infty}\frac1N\sum_{n=1}^N e^{2\pi i k x_n}=0.\]

\begin{proposition}\label{prop:ud} Let $T$ be an ergodic $\mu$-preserving transformation
and $f:X\to \mathbb R$ measurable.  Then  if $\mathcal S_nf(x)$ is a.s.u.d. mod $1$,  the equation $e^{2\pi i k f(x)}=g(x)/g(Tx)$ (a.s.) has no solution $g\in L^1$ with $\int g d\mu \neq0$, for each integer $k\neq 0$.
\end{proposition}
\begin{proof}
Suppose for some integer $k\neq 0$ and $g\in L^1$, $e^{2\pi i k f(x)}=g(x)/g(Tx)$ a.s. Then $|g|=|g\circ T|$. We can assume that $|g|$=1 by ergodicity of $T$. So \[\frac1N \sum_{n=1}^N  e^{2\pi i k \mathcal S_nf(x)}  =\frac 1N\sum_{n=1}^N  g(x)\bar{g}(T^nx) \to g(x) \int \bar{g} d\mu\ \text{a.s.}\]
by ergodic theorem. It follows from Weyl's criterion that if $\mathcal S_nf(x)$ is a.s.u.d. mod $1$ then $\int g d\mu =0$.
\end{proof}

For circle-valued skew products the coboundary condition is called {\em aperiodicity} in \cite{AaronsonDenker2001}. This terminology makes sense in view of \cite[Proposition 4.3]{BedfordFisherUrbanski2002}. However that fact for the skew product does not directly imply  the uniform distribution, which is why we give this argument.

\subsection{Gibbs-Markov maps} When $T$ is a Gibbs-Markov map, we give some sufficient conditions for the coboundary condition to hold that are easier to verify. First recall the definition of a Gibbs-Markov map from \cite{AaronsonDenkerUrbanski1993, AaronsonDenker2001}. Let $(\Omega,\mathcal B, \mu)$ be a probability space and let $T$ be a non-singular transformation as defined in ergodic theory; this weakens the usual condition of measure-preserving, to being measure-class-preserving, i.e.~preserving sets of measure zero: that is for $A\in\mathcal B$, $\mu(T^{-1}A)=0$ if and only if $\mu(A)=0$. In the setting of Aaronson, Denker and Urbanski, one generally studies maps which are non-invertible with a countable number of branches. The map restricted to each branch is non-singular while it may or may not be measure-preserving when taken as a whole.

Consider a countable partition $\alpha$ of $\Omega\mod \mu$. Denote by $\alpha_0^{n-1}$ the refined partition $\bigvee_{i=0}^{n-1}T^{-i}\alpha$ and by $\sigma(\cdot)$ the generated $\sigma$-algebra.
\begin{definition}
$T$ is called a Gibbs-Markov map if the following conditions are satisfied.
\begin{enumerate}[label=(\alph*)]
	\item $\alpha$ is a strong generator of $\mathcal{B}$ under $T$, i.e. $\sigma(\{T^{-n}\alpha:n\in\mathbb N\cup\{0\}\})=\mathcal{B} \mod \mu$.
	\item  $\inf\limits_{a\in\alpha}\mu(Ta)>0$.
	\item For every $a\in\alpha$, $Ta\in \sigma(\alpha) \mod \mu$, moreover the restriction $T|_{a}$ is invertible and non-singular.
	\item For every $n\in\mathbb N$ and $a\in\alpha_0^{n-1}$, denote the non-singular inverse branch of $T^{-n}$ on $T^n a$ by $v_a: T^n a\rightarrow a$. We denote the Radon-Nikodym derivative $\frac{d\mu\circ v_a}{d\mu}$ by $v'_a$. There exist $r\in (0,1)$ and $M>0$ such that for any $n\in \mathbb N, a\in\alpha_0^{n-1}$ and $x,y\in T^n a$ a.e.
\[\left|\dfrac{v'_a(x)}{v'_a(y)}-1\right|\leqslant M\cdot r(x,y),\]
\end{enumerate}
where $r(x,y)$ is the metric $r^{m(x,y)}$ with \[m(x,y)=\min\{n\in\mathbb N: T^{n-1}(x) \text{ and } T^{n-1}(y) \text{ belong to different elements of } \alpha \}.\]
\end{definition}
$T$ is said to be topologically mixing if for any $a, b\in\alpha$ there is $n_{a,b}\in\mathbb N$ such that for any $n\geqslant n_{a,b}$, $T^n a \supset b$.  Note that a topologically mixing and measure preserving Gibbs-Markov map is always ergodic (\cite[Corollary]{AaronsonDenker2001}). A function $f$ is said to be locally Lipschitz if \[D_\alpha (f)=\sup_{a\in\alpha}\sup_{x,y\in a}\frac{|f(x)-f(y)|}{r(x,y)}<\infty,\]
and Lipschitz if \[D(f)=\sup_{x,y\in \Omega}\frac{|f(x)-f(y)|}{r(x,y)}<\infty.\]
Let $\beta$ be the finest partition such that $\sigma(T\alpha)=\sigma(\beta)$.

\begin{proposition}\label{prop:cob} Let $T$ be a topologically mixing and $\mu$-preserving Gibbs-Markov map.  Suppose $e^{2\pi i k f(x)}=g(x)/g(Tx)$ a.s.~for some integer $k\neq 0$ and measurable $g$.
\begin{enumerate}
\item If $f$ is $\alpha$-measurable, then $g$ is $\beta$-measurable.
\item Suppose $p$ is a fixed point of $T$ and, with respect to the metric $r$, $p$ lies in the interior of a partition member and every open ball around $p$ has positive measure. If $p$ is a continuity point of both $T$ and $f$ and if $f$ is locally Lipschitz, then $e^{2\pi i k f(p)}=1$.
\end{enumerate}
\end{proposition}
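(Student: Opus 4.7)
For both parts we first normalize $|g|=1$: the coboundary identity forces $|g|\circ T=|g|$, and ergodicity of $T$ makes $|g|$ constant almost surely (and nonzero, since the equation requires $g(Tx)\neq 0$).

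For part (1), the plan is to iterate the coboundary equation via the transfer operator $\hat T$ of the Gibbs-Markov map. Using $\hat T^n(u\circ T^n)=u$ (a consequence of the $T$-invariance of $\mu$), one obtains $g=\hat T^n(e^{-2\pi i k\mathcal S_nf}g)$, which unfolds as
\[g(y)=\sum_{a\in\alpha_0^{n-1},\ y\in T^n a} v_a'(y)\,e^{-2\pi i k\mathcal S_nf(v_a(y))}\,g(v_a(y)).\]
The $\alpha$-measurability of $f$ makes $\mathcal S_nf$ constant on each $a\in\alpha_0^{n-1}$, and the Markov property puts $T^n a$ in $\sigma(\alpha)$, so the set of indices $\{a:y\in T^n a\}$ depends only on the atom of $\beta=\sigma(T\alpha)$ containing $y$. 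To conclude that $g$ is constant on $\beta$-atoms, compare $g(y_1)$ and $g(y_2)$ for $y_1,y_2$ in a common atom: the bounded distortion property (d) controls $v_a'(y_1)/v_a'(y_2)$, and the $r^n$-contraction of the composed inverse branches places $v_a(y_1),v_a(y_2)$ in a common atom of $\alpha_0^{n-1}$ at distance $\leqslant r^n\, r(y_1,y_2)$. The remaining step, where regularity of $g$ is needed, is to conclude that $g(v_a(y_1))$ and $g(v_a(y_2))$ converge to each other as $n\to\infty$, uniformly in $a$.

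For part (2), let $v=(T|_a)^{-1}$ be the inverse branch at the fixed point $p$. Since $p$ is interior to $a$, $v$ is defined on a neighborhood $U\ni p$ of positive measure, and the distortion estimate makes $v$ a strict $r$-contraction of ratio $r$; thus $v^n(y)\to p$ for $y\in U$. Iterating the coboundary equation in reverse,
\[g(v^n(y))=e^{2\pi i k\mathcal S_nf(v^n(y))}\,g(y),\qquad\mathcal S_nf(v^n(y))=\sum_{j=1}^n f(v^j(y)).\]
Since $f$ is locally Lipschitz and $r(v^j(y),p)\leqslant r^j\, r(y,p)$, the tail $\mathcal S_nf(v^n(y))-n f(p)$ is absolutely convergent, so $e^{2\pi i k\mathcal S_nf(v^n(y))}=e^{2\pi i k n f(p)}\,C_n(y)$ with $C_n(y)$ tending to a finite limit. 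If $g(v^n(y))\to g(p)$ for some $y\in U$ with $g(y)\neq 0$, then $e^{2\pi i k n f(p)}$ must converge; but a unit-modulus sequence $\lambda^n$ converges only if $\lambda=1$, so $e^{2\pi i k f(p)}=1$.

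The principal difficulty, common to both parts, is elevating $g$ from merely measurable to a function with meaningful pointwise values on small sets around $p$ or across refining atoms. The natural resolution is that a coboundary solution for a Lipschitz multiplier $e^{-2\pi i k f}$ (Lipschitz in both cases: for an $\alpha$-measurable $f$ giving a locally constant multiplier, and for a locally Lipschitz $f$) inherits Lipschitz regularity through the spectral gap of $\hat T$ on the Lipschitz space associated to the $r$-metric. Alternatively, because the hypothesis in (2) guarantees positive measure on balls around $p$, one can average $g$ over such balls and appeal to the Lebesgue differentiation theorem to recover a continuous representative near $p$.
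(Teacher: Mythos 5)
You have correctly located the crux of both parts, but you have not closed it: everything hinges on upgrading the merely measurable coboundary solution $g$ to a Lipschitz (hence continuous) function, and your proposal only gestures at this (``inherits Lipschitz regularity through the spectral gap of $\hat T$'') without proving it. That regularity statement is precisely what the paper imports from Aaronson--Denker: part (1) is quoted verbatim as \cite[Theorem 3.1]{AaronsonDenker2001}, and for part (2) the paper invokes \cite[Corollary 2.2]{AaronsonDenker2001}, which says that if $f$ is locally Lipschitz and $e^{2\pi ikf}=g/g\circ T$ a.e.\ for measurable $g$, then $g$ has a Lipschitz version. Your transfer-operator sketch for (1) stops exactly where that work begins (comparing $g(v_a(y_1))$ with $g(v_a(y_2))$), and your fallback via the Lebesgue differentiation theorem does not rescue part (2): differentiation of averages yields approximate continuity at almost every point, not a continuous representative, and in particular gives no control at the one specific point $p$ that matters. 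So as written the argument has a genuine gap at its central step; it becomes complete only if you either prove the regularity lemma (a Liv\v{s}ic-type argument along the lines of your own sketch for (1), pushed through using the distortion estimate (d)) or cite it, as the paper does.

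Granting the Lipschitz regularity of $g$, your part (2) is correct but takes a longer route than the paper's. The paper simply picks $x_n\to p$ inside the full-measure set where the coboundary equation holds (possible because balls around $p$ have positive measure), notes that $Tx_n\to p$ by continuity of $T$ at $p$, so $|g(x_n)-g(Tx_n)|\leqslant D(g)\,r(x_n,Tx_n)\to 0$ while $|g|=1$, and passes to the limit in the one-step equation to get $e^{2\pi ikf(p)}=1$. Your iteration along the backward orbit $v^n(y)\to p$ reaches the same conclusion via the non-convergence of $e^{2\pi iknf(p)}$ unless $e^{2\pi ikf(p)}=1$, which is fine, but it additionally requires the a.e.\ identity to hold at every point $v^j(y)$ of a fixed measure-zero orbit; this needs a word of justification (non-singularity of the inverse branch lets you choose $y$ so that the entire backward orbit avoids the exceptional null set). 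The one-step limit argument avoids both complications.
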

\begin{proof}
\begin{enumerate}
\item This is \cite[Theorem 3.1]{AaronsonDenker2001}.
\item We can assume that $|g|$=1 by ergodicity of $T$. According to \cite[Corollary 2.2]{AaronsonDenker2001}, the condition that $f$ is locally Lipschitz implies that $g$ is Lipschitz. 
For a fixed point $p$ in the assumption, there is a sequence $x_n$ tending to $p$ such that $|g(x_n)-g(Tx_n)|\leqslant D(g) r(x_n, Tx_n)$ and $e^{2\pi i k f(x_n)}=g(x_n)/g(Tx_n)$. Since both $Tx_n$ and $x_n$ tend to $p$ and since $f$ is continuous at $p$, it follows that $e^{2\pi i k f(p)}=1$.
\end{enumerate}
\end{proof}

\section{Applications to continued fractions}
We return to continued fractions and prove Theorem \ref{thm:benford} in this section.
Let $\Omega=(0,1)$, $\mu$ be the Gauss measure $d\mu=\frac{1}{\log 2}\frac{1}{1+x} dx$,  and $T$ be the continued fraction map $x\mapsto \{1/x\}$. Denoting the continued fraction expansion of an irrational number $x$ by $[a_1(x), a_2(x), \ldots]$, then $T$ is equal to the shift map $[a_1, a_2,\ldots]\mapsto [a_2, a_3, \ldots]$. Let $\alpha$ be the partition into cylinder sets determined by $a_1$, i.e. $\alpha=\{\{x: a_1(x)=n\}: n\in \mathbb N\}$. Let $r(x,y)={(1/2)}^{\max\{n:\,a_j(x)=a_j(y), 1\leqslant j\leqslant n\}}$. The constant $r=1/2$ is chosen so that $r^2\cdot |(T^2)'|\geqslant 1$. One can check that $T$ is a topologically mixing, $\mu$-preserving Gibbs-Markov map (\cite[Example 2]{AaronsonDenker2001}), so we can apply results from the previous section to the continued fraction map.

\begin{proposition}\label{prop:cob_cf}
The equation $e^{2\pi i k f(x)}=g(x)/g(Tx)$ has no solution for any integer $k\neq 0$ and measurable $g$ in each of the following cases.
\begin{enumerate}
\item $f(x)=a_1(x)$
\item $f(x)=\log (a_1(x))$
\item $f(x)=-\log (T^lx)$, where $l\in\mathbb N\cup\{0\}$.
\end{enumerate}
\end{proposition}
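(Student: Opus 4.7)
The plan is to derive a contradiction from the assumed coboundary equation $e^{2\pi i k f(x)} = g(x)/g(Tx)$ in each case, invoking Proposition~\ref{prop:cob}. A key feature of the Gauss map is that every branch $T|_{\{a_1 = n\}}$ maps its cylinder onto $(0,1)$ up to a null set; consequently $\sigma(T\alpha)$ is trivial, the partition $\beta$ is trivial, and any $\beta$-measurable function is a.s.\ constant.

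For cases (1) and (2), $f$ is $\alpha$-measurable, so Proposition~\ref{prop:cob}(1) forces $g$ to be $\beta$-measurable, hence a.s.\ constant. The equation then reduces to $k f(x) \in \mathbb{Z}$ for almost every $x$. In case (2), with $f = \log a_1$, the value $\log 2$ occurs on the positive-measure set $\{a_1 = 2\}$, so $k \log 2 \in \mathbb{Z}$, contradicting the irrationality of $\log 2$. Case (1), where $f = a_1$ is $\mathbb{N}$-valued, admits the trivial constant solution to the equation; this is consistent with Corollary~\ref{cor:ud} since any such $g$ has $\int g\, d\mu \neq 0$, matching the fact that the $\mathbb{Z}$-valued sequence $\mathcal{S}_n a_1$ cannot be u.d.\ mod $1$.

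For case (3), set $f_l(x) = f_0(T^l x)$ with $f_0(x) = -\log x$ and argue by induction on $l$. For $l \geq 1$, substitute into the coboundary equation the $n$-th inverse branch $x = T_n^{-1} y$ of $T$ to get, for almost every $y$ and every $n$,
\[
e^{2\pi i k f_{l-1}(y)} = \frac{g(T_n^{-1} y)}{g(y)},
\]
using $T(T_n^{-1} y) = y$ so that $f_l(T_n^{-1} y) = -\log T^{l-1}(y) = f_{l-1}(y)$. As the left-hand side is independent of $n$, $g$ is constant on every $T$-fibre; by the factorization lemma $g = h \circ T$ for a measurable $h$, and substitution back yields $e^{2\pi i k f_{l-1}(y)} = h(y)/h(Ty)$. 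Iterating reduces to $l = 0$. For $f_0$, I would apply Proposition~\ref{prop:cob}(2) at the fixed point $p = (\sqrt{5}-1)/2$ of $T$, which lies in the interior of $\{a_1 = 1\}$: $f_0$ is continuous at $p$ and locally Lipschitz on $\alpha$ (on each $\{a_1 = n\}$, $|f_0'| \leq n+1$ and the cylinder diameters decay fast enough in the symbolic metric $r$). The conclusion is $e^{2\pi i k f_0(p)} = 1$, i.e.\ $k \log \phi \in \mathbb{Z}$ where $\phi = (\sqrt{5}+1)/2$. But $\log \phi \notin \mathbb{Q}$: if $\log \phi = p/q$ with $p \neq 0$, then $\phi^q = e^p$ would equate an algebraic number with a Hermite--Lindemann transcendental, while $p = 0$ forces $\phi = 1$.

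The principal obstacle will be the inverse-branch reduction: the identity $g(T_n^{-1} y) = g(y) e^{2\pi i k f_{l-1}(y)}$ must hold on a single full-measure set valid for every $n$ (achieved via a countable union of null sets), and the factorization $g = h \circ T$ must yield a genuinely measurable $h$. A secondary technicality is verifying that the local-Lipschitz constant $D_\alpha(f_0)$ is finite in the symbolic metric, which requires comparing $r$-diameters to Euclidean cylinder lengths rather than relying on the Euclidean derivative bound alone.
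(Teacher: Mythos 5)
Your treatments of cases (2) and (3) are correct, and your observation about case (1) is also correct: as stated, case (1) is \emph{false}. Since $a_1$ is integer-valued, $e^{2\pi i k a_1(x)}=1$ identically for every integer $k$, so $g\equiv 1$ solves the equation; correspondingly $\mathcal S_n a_1=a_1+\cdots+a_n$ is always an integer and cannot be u.d.\ mod $1$, so Theorem \ref{thm:ud_a}(i) fails as well. The paper's one-line justification of (1) (``$e^{2\pi i k a_1}$ is not constant'') is simply wrong, and your reconciliation via Corollary \ref{cor:ud} (the constant solution has nonzero integral) is exactly the right diagnosis. For case (2) you argue as the paper does: Proposition \ref{prop:cob}(1) forces $g$ to be $\beta$-measurable, $\beta$ is trivial because every branch of the Gauss map is onto, and $k\log 2\in\mathbb Z$ is impossible.

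For case (3) your route genuinely differs from the paper's for $l\geqslant 1$, and it is the sounder one. The paper applies Proposition \ref{prop:cob}(2) directly to $f=-\log\circ\, T^l$, asserting that $D_{[n]}(\log\circ\, T^l)$ has order $1/(r^ln)$; but for $l\geqslant 1$ this quantity is infinite (take $x,y\in[n]$ differing already in the second partial quotient, so $r(x,y)=1/4$, while $T^lx$ may be arbitrarily close to $0$), so $f$ is not locally Lipschitz and the cited Corollary 2.2 of Aaronson--Denker does not apply. Your inverse-branch argument, which shows $g$ is constant on $T$-fibres, factors $g=h\circ T$, and peels off one power of $T$ at a time until $l=0$, repairs this gap; an even quicker repair is to note that $f\circ T^l-f=u\circ T-u$ with $u=\sum_{j=0}^{l-1}f\circ T^j$, so the coboundary equation for $f\circ T^l$ with transfer function $g$ is equivalent to the one for $f$ with transfer function $ge^{2\pi iku}$. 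At $l=0$ you also supply two details the paper omits or misstates: the fixed point is $p=(\sqrt5-1)/2$ (not $(\sqrt5+1)/2$ as printed), and the irrationality of $\log\frac{\sqrt5+1}{2}$, which you correctly derive from the transcendence of $e^q$ for nonzero integers $q$. The technical points you flag at the end are genuine but routine: the full-measure set uniform in $n$ is obtained from a countable union of null sets using non-singularity of the branches, and the comparison of Euclidean cylinder lengths with the symbolic metric (on $[n]$ one has $q_m\geqslant nF_m$, whence $D_{[n]}(\log)=O(1/n)$) confirms $D_\alpha(\log)<\infty$.
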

\begin{proof}
This follows immediately form Proposition \ref{prop:cob}, since
\begin{enumerate}
\item $a_1(x)$ is $\alpha$-measurable and $\beta=\{\Omega\}$. For $g$ to be $\beta$-measurable, it has to be constant, but $e^{2\pi i k a_1}$ is not.
\item The same reason as item 1.
\item First let $l=0$. Note that $\log x$ is locally Lipschitz, because $D_{[n]}(\log)$ has the order of $\frac 1n$. Consider the fixed point $p=\frac{\sqrt{5}+1}{2}=[1,1,\ldots]$, which is in the interior of the cylinder set $\{x: a_1(x)=1\}$. Both $T$ and $\log$ are continuous at $p$, but $e^{-2\pi i k \log(p)}\neq 1$ for any integer $k\neq 0$. For $l\in\mathbb N$,
$\log\circ T^l$ is locally Lipschitz, as $D_{[n]}(\log\circ T^l)$ has the order of $\frac{1}{r^l n}$. But again $e^{-2\pi i k \log(T^lp)}\neq 1$ for any integer $k\neq 0$.
\end{enumerate}
\end{proof}

Consequently, applying Theorem \ref{thm:asud},
\begin{theorem}\label{thm:ud_a} With respect to $\mu$, hence also with respect to  Lebesgue measure, each of the following sequences is a.s.u.d.~mod $1$.
\begin{enumerate}[label=(\roman*)]
\item $(a_1(x)+\cdots + a_n(x))_{n\in\mathbb N}$
\item
$(\log(a_1(x)\cdots a_n(x)))_{n\in\mathbb N}$
\item $(-\mathcal S_n \log(T^l x))_{n\in\mathbb N}$, where $l\in\mathbb N\cup\{0\}$.
\end{enumerate}
\end{theorem}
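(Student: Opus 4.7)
The plan is to recognize each of the three sequences as an ergodic sum $\mathcal S_n f$ for one of the observables $f$ treated in Proposition~\ref{prop:cob_cf}, and then to invoke Theorem~\ref{thm:asud} directly. Since the continued fraction map $T$ is topologically mixing and $\mu$-preserving Gibbs--Markov, it is ergodic, so the framework of Theorem~\ref{thm:asud} applies.

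For (i), writing $a_k(x) = a_1(T^{k-1}x)$, I would note that $a_1(x)+\cdots+a_n(x) = \mathcal S_n a_1(x)$, which is an ergodic sum of $f(x) = a_1(x)$. For (ii), $\log(a_1(x)\cdots a_n(x)) = \mathcal S_n(\log a_1)(x)$, the ergodic sum of $f(x) = \log a_1(x)$. For (iii), the sequence is already presented as the ergodic sum $\mathcal S_n f$ of $f(x) = -\log(T^l x)$. Note that integrability of $f$ is not needed anywhere in Theorem~\ref{thm:asud}; only measurability and the coboundary hypothesis are required, which is fortunate because $a_1 \notin L^1(\mu)$.

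Now I would apply Proposition~\ref{prop:cob_cf}, which asserts in each of the three cases that there is no measurable $g$ (and a fortiori no $g\in L^2$) satisfying the coboundary equation $e^{2\pi i k f(x)} = g(x)/g(Tx)$ for any integer $k\neq 0$. Hence the hypothesis of Theorem~\ref{thm:asud} is met, and we conclude that each $\mathcal S_n f$ is a.s.u.d.~mod $1$ with respect to $\mu$.

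Finally, since the Gauss measure $\mu$ and Lebesgue measure on $(0,1)$ are mutually absolutely continuous (the density $\frac{1}{\log 2}\cdot \frac{1}{1+x}$ is bounded above and below away from $0$), a full $\mu$-measure set is a full Lebesgue-measure set, so the a.s.~statement passes from $\mu$ to Lebesgue. There is no real obstacle here; the only minor point to verify is that Proposition~\ref{prop:cob_cf} rules out coboundaries in the stronger class of measurable $g$, which already subsumes the $L^2$ class appearing in the hypothesis of Theorem~\ref{thm:asud}.
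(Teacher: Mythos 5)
Your proposal is correct and follows exactly the route the paper intends: Theorem~\ref{thm:ud_a} is stated there as an immediate consequence of Proposition~\ref{prop:cob_cf} together with Theorem~\ref{thm:asud}, which is precisely your argument. Your added remarks --- that ruling out measurable coboundaries subsumes the $L^2$ case, that integrability of $f$ is not needed, and that the equivalence of $\mu$ and Lebesgue measure transfers the almost-sure statement --- are all correct fillings-in of details the paper leaves implicit.
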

\begin{remark}
Item (ii) implies that $(a_1(x)\cdots a_n(x))_{n\in\mathbb N}$ is a Benford sequence a.s.
\end{remark}

Now we prepare for the proof of Theorem  \ref{thm:benford}. Recalling that $q_n$ denotes the denominator of the convergent $[a_1,\ldots, a_n]$,  to extend the previous theorem to sequences such as $(\log q_n)$ that are not necessarily ergodic sums, we make use of a simple approximation lemma. Given a sequence $(u_n(x))_{n\in\mathbb N}$ and a series of sequences $(u_n^{(k)}(x))_{n\in\mathbb N}$, where $k\in\mathbb N$ and $x\in\Omega$,
\begin{lemma}\label{lem:aprox}
Suppose that for every $k\in\mathbb N$ the sequence $(u_n^{(k)}(x))_{n\in\mathbb N}$ is a.s.u.d.~mod $1$ and that \[\lim_{k\to\infty}\sup_{n> k}\|u_n^{(k)}-u_n\|_\infty=0,\]
then $(u_n(x))_{n\in\mathbb N}$ is also a.s.u.d.~mod $1$.
\end{lemma}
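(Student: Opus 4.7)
The plan is to use Weyl's criterion to pass from uniform distribution of the approximants to that of the target sequence. Recall that $(x_n)$ is u.d.\ mod $1$ iff for every integer $j\neq 0$, $\frac1N\sum_{n=1}^N e^{2\pi i j x_n}\to 0$. So the task reduces to showing that, for almost every $x$, these exponential averages vanish for the sequence $(u_n(x))$.

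First I would extract the common full-measure set. For each $k\in\mathbb N$, by hypothesis there is a set $E_k\subseteq\Omega$ of full measure on which $(u_n^{(k)}(x))_{n\in\mathbb N}$ is u.d.\ mod $1$; set $E=\bigcap_{k\in\mathbb N} E_k$, which also has full measure. Fix $x\in E$, a nonzero integer $j$, and $k\in\mathbb N$. Using the elementary Lipschitz bound $|e^{2\pi i j a}-e^{2\pi i j b}|\leqslant 2\pi|j|\,|a-b|$, split the comparison of Weyl sums at index $k$:
\begin{equation*}
\left|\frac1N\sum_{n=1}^N e^{2\pi i j u_n(x)} - \frac1N\sum_{n=1}^N e^{2\pi i j u_n^{(k)}(x)}\right|
\leqslant \frac{2k}{N} + 2\pi|j|\,\sup_{n>k}\|u_n-u_n^{(k)}\|_\infty.
\end{equation*}
Writing $\varepsilon_k := \sup_{n>k}\|u_n-u_n^{(k)}\|_\infty$, which tends to $0$ by assumption, and using that $x\in E_k$ makes the second Weyl sum go to $0$, we obtain
\begin{equation*}
\limsup_{N\to\infty}\left|\frac1N\sum_{n=1}^N e^{2\pi i j u_n(x)}\right| \leqslant 2\pi|j|\,\varepsilon_k.
\end{equation*}

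Since this holds for every $k$, letting $k\to\infty$ gives that the $\limsup$ is $0$, hence the Weyl sum tends to $0$. As $j\neq 0$ was arbitrary, Weyl's criterion yields that $(u_n(x))_{n\in\mathbb N}$ is u.d.\ mod $1$ for every $x\in E$, proving the lemma. There is no serious obstacle here; the only subtlety is the choice to split the sum exactly at index $k$, which is what lets the uniform bound $\|u_n-u_n^{(k)}\|_\infty$ be controlled by $\varepsilon_k$ on the tail while the initial block contributes only $O(k/N)$.
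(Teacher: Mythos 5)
Your proof is correct and follows essentially the same route as the paper's: split the Weyl sum at index $k$, bound the initial block by $2k/N$ and the tail by the Lipschitz estimate for the exponential, then let $N\to\infty$ and $k\to\infty$. The only cosmetic differences are that you keep $k$ free and pass to a $\limsup$ where the paper chooses $k=k(m)$ to get an explicit $3/m$ bound, and that you carry the general frequency $j$ through Weyl's criterion explicitly (the paper writes only $j=1$), which is if anything slightly more careful.
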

\begin{proof}
For every $m\in\mathbb N$, choose $k=k(m)$ such that \[2\pi\sup_{n> k}\|u_n^{(k)}-u_n\|_\infty<1/m.\]
So there is a full-measure set $\Omega_m'$ such that for every $x\in\Omega_m'$,
\[2\pi\sup_{n> k}|u_n^{(k)}(x)-u_n(x)|<1/m.\]
Again by assumption, there is a full-measure set $\Omega_m$ such that for every $x\in\Omega_m$, $(u_n^{(k(m))}(x))_{n\in\mathbb N}$ is u.d.~mod $1$. Put $\Omega_0=\cap_{m\in\mathbb N} \Omega_m'\cap\Omega_m$, which is a full-measure set. Let $x\in\Omega_0$. Since $(u^{(k(m))}_n(x))_{n\in\mathbb N}$ is u.d.~mod $1$, in view of Weyl's criterion, choose $N_0>2km$ such that for all $N\geqslant N_0$
\[\frac 1N\left|\sum_{n=1}^N e^{2\pi i u^{(k)}_n(x)}\right|\leqslant 1/m.\]
So for all $N\geqslant N_0$
\begin{align*}
\frac1N \left|\sum_{n=1}^N e^{2\pi i u_n(x)}\right| & \leqslant \frac1N\left|\sum_{n=1}^N e^{2\pi i u^{(k)}_n(x)}\right|+\frac1N\sum_{n=1}^N \left|e^{2\pi i u^{(k)}_n(x)}-e^{2\pi i u_n(x)}\right|\\
&\leqslant \frac1N\left|\sum_{n=1}^N e^{2\pi i u^{(k)}_n(x)}\right|+\frac{2k}{N}+2\pi\sup_{n>k}|u_n^{(k)}(x)- u_n(x)|\\
&\leqslant 3/m.
\end{align*}
Hence $\lim_{N\to\infty}\frac 1N\sum_{n=1}^N e^{2\pi i u_n(x)}=0$, and $(u_n(x))_{n\in\mathbb N}$ is u.d.~mod $1$.
\end{proof}

Now consider the difference $\delta_n(x)=\log q_n(x)-(-\mathcal S_n\log(x))$. It is well-known that $\delta_n(x)$ is uniformly bounded; in fact (see e.g.~\cite{Khinchin1997}),

\begin{lemma}\label{lem:delta}
\[\delta_n(x)=-\log(1+T^{n}x\cdot [a_{n},\ldots, a_1])\]
where $[a_n,\ldots, a_1]$ represents the finite continued fraction $1/(a_n+\cdots +1/a_1).$
\end{lemma}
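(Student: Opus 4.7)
The plan is to split the target identity into two pieces: a ``modulus'' identity expressing $\mathcal{S}_n\log(x)$ in terms of $q_n, q_{n-1}, T^nx$, and the classical reversal identity for continued fractions, and then combine them.

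First I would prove by induction on $n$ that
\begin{equation*}
\mathcal{S}_n\log(x) \;=\; -\log\bigl(q_n + q_{n-1} T^n x\bigr),
\end{equation*}
using the standard conventions $q_{-1}=0,\ q_0=1$. The base case $n=0$ reads $0 = -\log(1)$. For the inductive step, I would show the one-step increment:
\begin{equation*}
(q_n + q_{n-1} T^n x)\, T^{n-1}x \;=\; q_{n-1} + q_{n-2} T^{n-1}x,
\end{equation*}
which gives exactly $\log T^{n-1}x = -\log(q_n+q_{n-1}T^nx) + \log(q_{n-1}+q_{n-2}T^{n-1}x)$. To verify the identity, I would expand $q_n = a_n q_{n-1} + q_{n-2}$ on the left, factor out $q_{n-1}(a_n + T^nx)T^{n-1}x$, and use the shift relation $T^{n-1}x = 1/(a_n + T^n x)$ so that $(a_n + T^n x)T^{n-1}x = 1$; the surviving terms then match the right-hand side.

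Second, I would invoke the reversal identity
\begin{equation*}
[a_n, a_{n-1}, \ldots, a_1] \;=\; \frac{q_{n-1}}{q_n},
\end{equation*}
which follows from the fact that the convergents $p_n^*/q_n^*$ of the reversed continued fraction satisfy $q_n^* = q_n$ and $p_n^* = q_{n-1}$ (a standard matrix-product or induction argument using the same recursion $q_k = a_k q_{k-1} + q_{k-2}$, just read from the other end). Putting the two steps together,
\begin{equation*}
\delta_n(x) \;=\; \log q_n + \mathcal{S}_n\log(x) \;=\; \log q_n - \log\bigl(q_n + q_{n-1} T^n x\bigr) \;=\; -\log\!\Bigl(1 + \tfrac{q_{n-1}}{q_n} T^n x\Bigr),
\end{equation*}
and substituting the reversal identity yields the claim.

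There is no real obstacle here: both ingredients are classical, and the only care required is keeping conventions consistent (the indexing of $\mathcal{S}_n$ as $f(x)+\cdots+f(T^{n-1}x)$, the boundary values $q_{-1}=0,\ q_0=1$, and the direction of the reversal). I would just be careful to present the induction cleanly, since that one-line shift identity $(a_n + T^nx)T^{n-1}x = 1$ is what makes everything collapse.
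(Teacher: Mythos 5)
Your proof is correct and takes essentially the same approach as the paper: both arguments compute the telescoping product $x\cdot Tx\cdots T^{n-1}x$ in closed form and then invoke the reversal identity $[a_n,\ldots,a_1]=q_{n-1}/q_n$. Your closed form $1/(q_n+q_{n-1}T^nx)$ is just an equivalent rewriting of the paper's $(-1)^{n-1}(xq_{n-1}-p_{n-1})$, obtained by telescoping from the other end of the recursion.
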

\begin{proof}
Because \[x=\frac{p_{n-1}r_n+p_{n-2}}{q_{n-1}r_n+q_{n-2}},\] where $r_n=1/(T^{n-1}x)$,
one finds \[T^nx=\frac{xq_n-p_n}{p_{n-1}-xq_{n-1}}.\]
So
\[q_n\cdot x\cdots T^{n-1}x=(-1)^{n-1}q_n(xq_{n-1}-p_{n-1}).\]
Because $p_nq_{n-1}-p_{n-1}q_n=(-1)^{n-1}$ and $\frac{q_{n-1}}{q_n}=[a_n, \ldots, a_1]$, it follows that
\begin{align*}
\delta_n
&=\log(q_n\cdot x\cdots T^{n-1}x)\\
&=-\log\left(1+\frac{x-\frac{p_n}{q_n}}{\frac{p_{n-1}}{q_{n-1}}-x}\right)\\
&=-\log(1+T^{n}x\cdot [a_{n},\ldots, a_1]).
\end{align*}
\end{proof}

Moreover, we can approximate $(\delta_n(x))_{n\in\mathbb N}$ by a dynamical sequence up to a uniformly small error.
\begin{lemma}\label{lem:aprox_delta} For all $k\in\mathbb N$, $n>k$ and $x\in (0,1)$,
\[|\delta_k(T^{n-k}x)-\delta_n(x)|<-\log(1-2^{-k/2}).\]
\end{lemma}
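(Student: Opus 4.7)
The plan is to expand both $\delta_n(x)$ and $\delta_k(T^{n-k}x)$ via Lemma~\ref{lem:delta} and reduce the desired bound to the distance between two finite continued fractions that share a long initial block.

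First, I would observe that $T^k(T^{n-k}x) = T^n x$ and that the first $k$ partial quotients of $T^{n-k}x$ are exactly $a_{n-k+1}(x), \ldots, a_n(x)$. Applying Lemma~\ref{lem:delta} on both sides then gives
\[
\delta_n(x) = -\log\bigl(1 + T^n x \cdot [a_n, \ldots, a_1]\bigr), \qquad \delta_k(T^{n-k}x) = -\log\bigl(1 + T^n x \cdot [a_n, \ldots, a_{n-k+1}]\bigr).
\]
Setting $u = T^n x \cdot [a_n, \ldots, a_1]$ and $v = T^n x \cdot [a_n, \ldots, a_{n-k+1}]$, both values lie in $[0,1)$, and the quantity to estimate is $|\log((1+v)/(1+u))|$.

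Next, I would estimate $D := |[a_n, \ldots, a_1] - [a_n, \ldots, a_{n-k+1}]|$. The finite continued fraction $[a_n, \ldots, a_{n-k+1}]$ is exactly the $k$-th convergent of $[a_n, \ldots, a_1]$, so the classical bound yields $D \leq 1/(Q_k Q_{k+1})$, where $Q_j$ denotes the denominator of the $j$-th such convergent. A short Fibonacci-type induction from $Q_j = a_j Q_{j-1} + Q_{j-2} \geq Q_{j-1} + Q_{j-2}$ shows $Q_k Q_{k+1} \geq 2^k$, hence $D \leq 2^{-k} \leq 2^{-k/2}$. Since $T^n x < 1$, we also get $|u - v| \leq D \leq 2^{-k/2}$.

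Finally, for any $u, v \in [0,1)$ with $|u-v| \leq D$, a direct monotonicity check gives $(1+u)/(1+v) \in [1-D, 1+D]$; since $-\log(1-D) \geq \log(1+D)$, this yields $|\log(1+u) - \log(1+v)| \leq -\log(1-D)$. Specialising to $D = 2^{-k/2}$ gives the stated bound. The only real (and quite minor) obstacle is the bookkeeping that identifies the reversed partial quotients of $T^{n-k}x$ with the tail block $(a_{n-k+1}, \ldots, a_n)$ of $x$; the rest is purely elementary, resting on the exponential growth of convergent denominators and a one-line log estimate.
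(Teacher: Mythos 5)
Your proposal is correct and follows essentially the same route as the paper: both identify $\delta_k(T^{n-k}x)$ as obtained from $\delta_n(x)$ by replacing $[a_n,\ldots,a_1]$ with its $k$-th convergent $[a_n,\ldots,a_{n-k+1}]$, bound their distance by $1/(\tilde q_k\tilde q_{k+1})$ via the standard convergent estimate and the exponential growth of denominators, and finish with an elementary bound on $\log(1+\cdot)$. The only difference is packaging — the paper carries out the determinant/recurrence algebra explicitly inside a single logarithm, while you invoke the classical convergent inequality as a black box — and your denominator bound $Q_kQ_{k+1}\geqslant 2^k$ is in fact slightly sharper than the paper's $2^{k/2}$.
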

\begin{proof}
Fix $n\in\mathbb N$. For every $1\leqslant k\leqslant n$, let ${\tilde p_k}/{\tilde q_k}=[a_n,\ldots, a_{n-k+1}]$ be the $k$-th convergent of the finite continued fraction $[a_n,\ldots, a_1]$, $\tilde r_k=1/[a_{n-k+1}, \ldots, a_1]$ be the $k$-th remainder and let $\tilde p_0=0, \tilde q_0=1$, then
\[[a_n, \ldots, a_1]=\frac{\tilde p_n}{\tilde q_n}=\frac{\tilde p_k\tilde r_{k+1}+\tilde p_{k-1}}{\tilde q_k\tilde r_{k+1}+\tilde q_{k-1}}, \quad 1\leqslant k<n.\]
One calculates
\begin{align*}
&\quad \delta_k(T^{n-k}x)-\delta_n(x)\\
&=-\log(1+T^{n}x\cdot [a_{n},\ldots, a_{n-k+1}])+\log(1+T^{n}x\cdot [a_{n},\ldots, a_1])\\
&=-\log\left(1+\frac{T^nx\cdot([a_n,\ldots, a_{n-k+1}]-[a_n, \ldots, a_1])}{1+T^nx\cdot [a_n, \ldots, a_1]}\right)\\
&=-\log\left(1+\frac{T^nx\cdot \left(\tilde p_k\tilde q_{k-1}-\tilde p_{k-1}\tilde q_k\right)}{\tilde q_k\left(\tilde q_k\tilde r_{k+1}+\tilde q_{k-1}\right)+T^nx\cdot \tilde q_k\left(\tilde p_k\tilde r_{k+1}+\tilde p_{k-1}\right)}\right)\\
&=-\log\left(1+\frac{T^nx\cdot (-1)^{k-1}}{\tilde q_k\left(\tilde q_k\tilde r_{k+1}+\tilde q_{k-1}\right)+T^nx\cdot \tilde q_k\left(\tilde p_k\tilde r_{k+1}+\tilde p_{k-1}\right)}\right),
\end{align*}
hence the lemma follows from the estimate
\begin{align*}
&\quad \left|\frac{T^nx\cdot (-1)^{k-1}}{\tilde q_k\left(\tilde q_k\tilde r_{k+1}+\tilde q_{k-1}\right)+T^nx\cdot \tilde q_k\left(\tilde p_k\tilde r_{k+1}+\tilde p_{k-1}\right)}\right|\\
&< \frac{1}{\tilde q_k\left(\tilde q_k\tilde r_{k+1}+\tilde q_{k-1}\right)}\leqslant \frac{1}{\tilde q_k\left(\tilde q_k a_{n-k}+\tilde q_{k-1}\right)}
=\frac{1}{\tilde q_k\tilde q_{k+1}}\leqslant 2^{-k/2}.
\end{align*}
\end{proof}

\begin{proof}[Proof of Theorem \ref{thm:benford}]
Let $h(x)=-\log(x)$, and for every $k\in\mathbb N$ let \[h^{(k)}=h\circ T^k-\delta_k+\delta_k\circ T.\]  Note that
for all $n> k$, \[\log q_n(x)=\mathcal S_nh+\delta_n=\mathcal S_{n-k}h^{(k)}+\mathcal S_kh+\delta_k-\delta_k\circ T^{n-k}+\delta_n.\]
In order to show that  $(\log q_n(x))_{n\in\mathbb N}$ is a.s.u.d.~mod $1$, applying Lemmas \ref{lem:aprox} and \ref{lem:aprox_delta},  it suffices to show that for every $k$ the sequence \[(\mathcal S_{n-k}h^{(k)}(x)+\mathcal S_k h(x)+\delta_k(x))_{n> k}\] is a.s.u.d.~mod $1$. For every $k$, subtract $\mathcal S_k h(x)+\delta_k(x)$  from every entry of the latter sequence. Since a translation does not change equidistribution, the equidistribution of the sequence $(\mathcal S_{n-k}h^{(k)}(x))_{n> k}=(\mathcal S_nh^{(k)}(x))_{n\in\mathbb N}$ will complete the proof.

According to Proposition \ref{prop:cob_cf}, the equation $e^{2\pi i m h\circ T^k}=g/g\circ T$ has no solution for any integer $m\neq 0$ and measurable $g$, so $e^{2\pi i m h^{(k)}}=g/g\circ T$ also has no solution. Thus,  by Theorem \ref{thm:asud},  $(\mathcal S_nh^{(k)}(x))_{n\in\mathbb N}$ is a.s.u.d.~mod $1$.
\end{proof}

\begin{remark} Lemma \ref{lem:aprox_delta} may be of independent interest. With $T$ being $\mu$-preserving, Lemma \ref{lem:aprox_delta} implies that $\lim_{n\to\infty}\int \delta_n(x) d\mu$ exists and together with the ergodic theorem this implies that for almost every $x$ \[\lim_{n\to\infty}\frac1n\sum_{j=1}^n{\delta_j}(x)=\lim_{n\to\infty}\int \delta_n d\mu.\]
The limit on the left hand side can be explicitly calculated. Let $t_n(x)=T^nx\cdot \frac{q_{n-1}(x)}{q_{n}(x)}$, then Bosma, Jager and Wiedijk (\cite[Theorem 3]{BosmaJagerWiedijk1983}) found that for a.e.~$x$,
$\frac{1}{n}\sum_{j=1}^n\mathbf 1_{(0,z]}(t_j(x))$ converges to $F(z)$ at every $z\in[0,1]$, where
\[F(z)=\frac{1}{\log 2}\left(\log(1+z)-\frac{z}{1+z}\log z\right).\]
Since $\delta_n(x)=-\log(1+t_n(x))$ and since $\log(1+z)$ is a bounded and continuous function on $[0,1]$, one has that for almost every $x$
\[\lim_{n\to\infty}\frac1n\sum_{j=1}^n{\delta_j}(x)=\int_0^1-\log(1+z)dF(z)=-1-\frac12\log 2+\frac{\pi^2}{12\log2}.\]
So $\lim_{n\to\infty}\int \delta_n d\mu$ is equal to this number as well.

Let \[\theta_n(x)=q_n|xq_n-p_n|.\] Since $\delta_n=\log\theta_{n-1}+\log \frac{q_n}{q_{n-1}}$, it follows that for almost every $x$, \[\lim_{n\to\infty}\frac1n\sum_{j=1}^n\log {\theta_j}(x)=-1-\frac12\log 2.\]
This gives an alternative proof of a result of Haas (\cite{Haas2005}).
\end{remark}

\bibliographystyle{alpha}


\end{document}